\newcommand{\COLORON}{1}
\newcommand{\NOTESON}{0}
\newcommand{\Debug}{0}
\newcommand{\diam}{\mathrm{diam}}
\newcommand{\hhat}{\mathrm{hat}}
\newcommand{\cyl}{\mathrm{cyl}}
\newcommand{\ttop}{\mathrm{top}}
\newcommand{\comment}[1]{}
\newcommand{\COMMENT}[1]{}
\definecolor{darkgray}{rgb}{0.3,0.3,0.3}
\newcommand{\defi}[1]{{\color{darkgray}\emph{#1}}}
\newcommand{\acknowledgement}{\section*{Acknowledgement}}
\newtheorem{proposition}{Proposition}[section]
\newtheorem{definition}[proposition]{Definition}
\newtheorem{theorem}[proposition]{Theorem}
\newtheorem{corollary}[proposition]{Corollary}
\newtheorem{lemma}[proposition]{Lemma}
\newtheorem{observation}[proposition]{Observation}
\newtheorem{conjecture}{{Conjecture}}[section]
\newtheorem{problem}[conjecture]{{Problem}}
\newtheorem{examp}[proposition]{Example}
\newcommand{\FIG}{0}
\newcommand{\note}[1]{ 

\hspace*{-30pt}
	{\color{blue}  NOTE: \color{Turquoise}{\small  \tt \begin{minipage}[c]{1.1\textwidth}  #1 \end{minipage} \ignorespacesafterend }} 
	
	}
\else \newcommand{\note}[1]{} \fi
\newcommand{\afsubm}[1]{ \ifnum \Debug = 1 {\mymargin{#1}}
\fi} 
\newcommand{\fig}[1]{Figure ``{#1}''}
\else \newcommand{\fig}[1]{Figure~\ref{#1}} \fi
\renewcommand{\color}[1]{}
\newcommand{\N}{\ensuremath{\mathbb N}}
\newcommand{\R}{\ensuremath{\mathbb R}}
\newcommand{\BS}{\ensuremath{\mathbb S}}
\newcommand{\cs}{\ensuremath{\mathcal S}}
\newcommand{\sm}{\backslash}
\newcommand{\cls}[1]{\ensuremath{\overline{#1}}}
\DeclareRobustCommand{\cev}[1]{%
  \mathpalette\do@cev{#1}%
}
\newcommand{\do@cev}[2]{%
  \fix@cev{#1}{+}%
  \reflectbox{$\m@th#1\vec{\reflectbox{$\fix@cev{#1}{-}\m@th#1#2\fix@cev{#1}{+}$}}$}%
  \fix@cev{#1}{-}%
}
\newcommand{\fix@cev}[2]{%
  \ifx#1\displaystyle
    \mkern#23mu
  \else
    \ifx#1\textstyle
      \mkern#23mu
    \else
      \ifx#1\scriptstyle
        \mkern#22mu
      \else
        \mkern#22mu
      \fi
    \fi
  \fi
}
\newcommand{\nin}{\ensuremath{{n\in\N}}}
\newcommand{\pth}[2]{\ensuremath{#1}\text{--}\ensuremath{#2}~path}
\newcommand{\pths}[2]{\ensuremath{#1}\text{--}\ensuremath{#2}~paths}
\newcommand{\seq}[1]{\ensuremath{(#1_n)_{n\in\N}}} 
\newcommand{\g}{\ensuremath{G\ }}
\newcommand{\G}{\ensuremath{G}}
\newcommand{\Tr}[1]{Theorem~\ref{#1}}
\newcommand{\Sr}[1]{Section~\ref{#1}}
\newcommand{\Prb}[1]{Problem~\ref{#1}}
\newcommand{\Cr}[1]{Corollary~\ref{#1}}
\newcommand{\Cnr}[1]{Con\-jecture~\ref{#1}}
\newcommand{\Dr}[1]{De\-fi\-nition~\ref{#1}}
\newcommand{\fe}{for every}
\newcommand{\st}{such that}
\newcommand{\wrt}{with respect to}
\newcommand{\labtequ}[2]{
 \begin{equation} \label{#1} 	\begin{minipage}[c]{0.9\textwidth}  #2 \end{minipage} \ignorespacesafterend \end{equation} }
\newcommand{\mymargin}[1]{
 \ifnum \Debug = 1
  \marginpar{%
    \begin{minipage}{\marginparwidth}\small%
      \begin{flushleft}%
        {\color{blue}#1}%
      \end{flushleft}%
   \end{minipage}%
  }%
 \fi
}%
\newcommand{\extras}[1]{
 \ifnum \Debug = 1
\section{Extras} #1
 \fi
}%
\newcommand{\mySection}[2]{}
\begin{document}
	\title{Compact metric spaces with infinite cop number}
	
\author{Agelos Georgakopoulos\thanks{Supported by EPSRC grants EP/V048821/1 and EP/V009044/1.}}
\affil{  {Mathematics Institute}\\
 {University of Warwick}\\
  {CV4 7AL, UK}}

\date{\today}

\maketitle

\begin{abstract}
Mohar recently adapted the classical game of Cops and Robber from graphs to  metric spaces, thereby unifying previously studied pursuit-evasion games. He conjectured that finitely many cops can win on any compact geodesic metric space, and that their number can be upper-bounded in terms of the ranks of the homology groups when the space is a  simplicial pseudo-manifold. We disprove these conjectures by constructing a metric on $\mathbb{S}^3$ with infinite cop number. More problems are raised than settled.
\end{abstract}

{\bf{Keywords:} } cops and robber, geodesic metric space, pursuit-evasion. \\

{\bf{MSC 2020 Classification:}} 91A44, 05C57, 91A24, 91A05, 49N75.  \\

\section{Introduction}

The game of Cops and Robber is one of the most studied games on graphs due its implications for the structure of the host graph, e.g.\ its tree-width \cite{SeyThoGra} or genus \cite{MohNot}. Well-known open problems include Meyniel's conjecture that $O(\sqrt{n})$ cops can catch the robber on any graph on $n$ vertices, and Schr\"oder's conjecture that $g + 3$ cops suffice on any graph of genus $g$ \cite{SchroCop}, while Mohar conjectures that $\sqrt{g}$ is the right order of magnitude \cite{MohNot}. See \cite{BonNowGam} for an extensive survey of the literature.

Mohar \cite{MohMin,MohGam} introduced a variant of the game taking place on an arbitrary compact geodesic metric space. This is similar to a game studied by Bollob\'as, Leader and Walters \cite{BoLeWaLio}. Other pursuit-evasion games like this taking place on continuous spaces have a rich literature, with motivation coming from control theory and several  practical applications; see \cite{Isaacs,MohGam,Lewin} and references therein.


\medskip
Mohar's aforementioned game is a common generalisation of many other pursuit-evasion games previously studied. It is played on an arbitrary compact geodesic metric space $X$, and involves a number of cops trying to capture, or approach arbitrarily close, a single robber. Cops and robber can travel the same distance $\tau(n)$ in each step $\nin$, decided by the robber under the sole restriction that $\sum \tau(n) = \infty$. The \defi{cop number $c(X)$} is the minimum number of cops that can win the game on $X$, i.e.\ be able to come to arbitrarily small distance to the robber. The precise definitions are given in \Sr{the game}. Mohar \cite{MohGam} made the following conjecture:

\begin{conjecture}[{\cite{MohGam}}] \label{conj fin}
Every game space $X$ has a finite cop number. 
\end{conjecture}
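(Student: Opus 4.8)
Since the abstract already announces that \Cnr{conj fin} is \emph{false}, the ``proof'' to produce is really a disproof, and the plan is to build a compact geodesic metric space $X$ homeomorphic to $\BS^3$ with $c(X)=\infty$. (Because $\BS^3$ has trivial homology, such an $X$ simultaneously refutes Mohar's quantitative conjecture for pseudo-manifolds, so a single example does everything.) The whole construction is driven by the classical fact that cop number is unbounded on finite graphs: there are finite graphs $G_1,G_2,\dots$ with $c(G_k)\to\infty$, e.g.\ graphs of sufficiently large girth and minimum degree (Frankl), and after a bounded subdivision one may assume each $G_k$ still has large cop number while giving the robber ``metric room'' — whenever its strategy tells it to flee, it can move to a vertex at graph-distance $\geq 2$ from every cop.

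The first technical point is that the continuous game faithfully reflects the discrete one. On the geometric realisation $|G_k|$, with every edge of length $1$, let the robber sit at vertices and play the winning discrete strategy against $k$ cops, with constant step length $\tau(n)\equiv 1$; a routine rounding argument (replace each cop by the vertex it is moving towards) then keeps the robber at distance $\geq 1$ from every cop forever, so $c(|G_k|)>k$, and after rescaling $c(\eps_k|G_k|)>k$ with guaranteed clearance $\eps_k$. Next I would assemble the pieces. Each $G_k$ embeds in $\R^3$; fix pairwise disjoint round balls $B_k\subseteq\BS^3$ shrinking to a single point $p$, and inside $B_k$ place a copy of $\eps_k|G_k|$ thickened to a very thin regular neighbourhood (so $\BS^3$ stays a manifold), with $\eps_k$ tiny relative to $\diam B_k$ and the thickening far thinner still; filling the complement of the pieces with an auxiliary geodesic metric yields a compact geodesic $X\cong\BS^3$. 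The metric must be engineered so that (a) inside each $B_k$ geodesics are forced to follow $\eps_k|G_k|$, with no transversal shortcut linking non-adjacent vertices, so the game restricted to piece $k$ is still essentially the game on $\eps_k|G_k|$; and (b) every path from $\BS^3\setminus B_k$ to a vertex $v$ deep in $\eps_k|G_k|$ has length at least $\eps_k$ times the graph-distance from $v$ to $\partial B_k$, so a cop loitering near the accumulation point $p$ is nonetheless metrically far from a robber buried deep in a piece.

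Granting this, the robber's strategy is immediate: on being told the number $k$ of cops it picks a piece $G_{k'}$ with $c(G_{k'})>k$ having no cop within $\diam B_{k'}$ of it (possible, since $k$ cops must avoid infinitely many such pieces), starts at a vertex of $\eps_{k'}|G_{k'}|$ far in graph-distance from $\partial B_{k'}$, and thereafter plays the winning discrete strategy for $G_{k'}$, treating any cop outside $B_{k'}$ as though it sat at the point where it would enter; by (a) and (b) this keeps the robber at distance bounded below by a fixed positive multiple of $\eps_{k'}$ from every cop for all time, so $k$ cops cannot win, and as $k$ was arbitrary $c(X)=\infty$. The main obstacle is exactly the geometric bookkeeping of the assembly step: producing one metric on $\BS^3$ that is at once genuinely geodesic and compact yet metrically ``tree-like'' enough that the pieces act as isolated copies of high-cop-number graphs — in particular ruling out both shortcuts through the thin thickening of a piece and cheap approaches through the crowded region near $p$. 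Everything else (unboundedness of cop number on finite graphs, the discrete-to-continuous comparison, the piece-selection argument) is comparatively routine once such an $X$ is in hand.
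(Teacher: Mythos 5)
Your overall plan is sound in spirit, but it has a genuine gap exactly where you locate the ``main obstacle'', and that obstacle is not mere geometric bookkeeping --- it is the entire technical content of the argument. You assert that one can thicken each $\eps_k|G_k|$ to a thin regular neighbourhood inside a ball $B_k\subseteq \BS^3$ and ``fill the complement with an auxiliary geodesic metric'' so that (a) geodesics inside $B_k$ are forced to follow the graph and (b) approaches from outside are expensive. Neither (a) nor (b) comes for free: the complement of a thin handlebody neighbourhood of $G_k$ in $B_k$ is a connected $3$-manifold touching every edge of the graph, and a length metric on it will generically create shortcuts between vertices that are far apart in $G_k$, unless the metric on that complement is made ``long'' in a controlled way while keeping $X$ compact, geodesic and homeomorphic to $\BS^3$. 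That is precisely what the paper's main theorem (\Tr{lem hat}) accomplishes: each sphere $S$ to be filled is capped by a ``hat'' whose cylindrical part $S\times[0,h]$ carries the $\ell_1$ metric with $h$ at least the maximal step length plus $\diam(X)$, and one then proves by a shadow-cop/accomplice argument (the invariant \eqref{s c e}) that a cop gains nothing by entering the attached region. Without an argument of this kind your properties (a) and (b), and hence the inequality $c(X)\geq c(G_{k'})$, remain unproved assertions.

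Separately, note that for the statement actually at issue --- disproving \Cnr{conj fin} --- none of this three-dimensional work is needed. The paper's Counterexample~1 simply rescales the graphs so that $\diam(G'_n)\to 0$ and wedges them at a single vertex $w$; the resulting space is compact and geodesic, and the robber's strategy is exactly your discrete-to-continuous rounding argument confined to a single $G'_n$ (with a careful tie-breaking rule at edge midpoints to keep each pretended cop position within distance $\tfrac12$ of the true one). Your first two paragraphs essentially reproduce this and are fine; also, since in this game the robber chooses all initial positions, your ``piece-selection'' step is unnecessary. The upgrade to a homeomorph of $\BS^3$ is the content of \Cr{cor inf} and genuinely requires \Tr{lem hat}; as written, your proposal postpones rather than closes that gap.
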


Our first result is a counterexample to this, obtained by glueing together a sequence of graphs with diverging cop numbers, after appropriately re-scaling their edge-lengths. Ir\v{s}i\v{c}, Mohar \& Wesolek \cite{IrMoWeCopII} had previously proved that the unit ball in $\ell^2(\N)$ has infinite \defi{strong} cop number, defined analogously but requiring a cop to `catch' the robber, i.e.\ coincide with his position.

I perceive this counterexample as good news for the concept of cop number, as it shows that finiteness of  $c(X)$ is a non-trivial property of a metric space $X$. It could be useful to relate $c(X)$ with other properties of a metric space, and we pose some problems in this direction in \Sr{FP}. It would be particularly interesting to find implications of the finiteness of $c(X)$ on the structure of $X$. The best example of such a result I am aware of says that if a single cop of speed strictly less than that of the robber can catch the latter on a finite graph \G, then \g is Gromov-hyperbolic \cite{CCPP}. It would be interesting to extend this result to an arbitrary compact geodesic metric space.

\medskip
Mohar obtained upper bounds on the cop numbers of compact surfaces, which naturally led to another conjecture that we will disprove:
\begin{conjecture}[{\cite{MohGam}}] \label{conj pseudo}
Suppose that $X$ is an $n$-dimensional simplicial pseudomanifold, whose $i$th homology group $H_i(X)$ has rank $r_i$ for $i=1, \ldots, n$. Then $c(X)=O(n \sqrt{r_1 + \ldots + r_n})$.
\end{conjecture}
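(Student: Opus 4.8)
The plan is to disprove \Cnr{conj pseudo}, and in fact to do so in the strongest possible form: I will construct a metric on $\BS^3$ --- realised as a (piecewise-Euclidean) simplicial pseudomanifold --- with \emph{infinite} cop number. Since $H_1(\BS^3)=H_2(\BS^3)=0$, the right-hand side of the putative bound is then a fixed constant while the left-hand side is $\infty$; already a suitable \emph{finite} triangulation of $\BS^3$ with cop number exceeding that constant would suffice, but the infinite construction is cleaner and matches the abstract. The raw material is the counterexample to \Cnr{conj fin}: finite (connected) graphs $(G_n)$ with $c(G_n)\to\infty$, rescaled so that $\diam(G_n)=\eps_n\to 0$. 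The idea is to install an isometric copy of each metric graph $G_n$ inside $\BS^3$ as the \emph{core} of a small metric gadget $W_n$, with the $W_n$ pairwise disjoint and shrinking to a point, and to let the robber survive inside whichever core has cop number exceeding the number of cops in play.

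For the gadget I would thicken $G_n$ to a genus-$g_n$ handlebody $H^{(n)}$, where $g_n$ is the cycle rank of $G_n$, giving it the product-type metric in which $G_n$ sits as an \emph{isometric} core, the collapsing map $r_n\colon H^{(n)}\to G_n$ is $1$-Lipschitz, and all fibres have diameter $\ll\eps_n$. Then complete $H^{(n)}$ to a $3$-ball $W_n$ by attaching $g_n$ canceling $2$-handles, metrized so that everything outside $H^{(n)}$ is ``deep'': any path that leaves $H^{(n)}$ and returns has length $\gg\eps_n$, so that $H^{(n)}$ is a metrically isolated --- hence convex --- subspace of $W_n$. Topologically $W_n$ is a ball, so re-metrizing a disjoint shrinking sequence of small balls of $\BS^3$ to look like $W_1,W_2,\dots$ leaves the homeomorphism type unchanged; triangulating everything with simplices shrinking towards the accumulation point keeps the result a compact geodesic simplicial pseudomanifold homeomorphic to $\BS^3$.

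The cop-number lower bound is then an imitation argument. Given $k$ cops, fix $n$ with $c(G_n)>k$ and let the robber play entirely inside the core $G_n\subseteq H^{(n)}\subseteq W_n$. Project each cop to a virtual cop on $G_n$: its image under $r_n$ if it lies in $H^{(n)}$, and ``absent'' (parked at the gate) otherwise. Because $H^{(n)}$ is metrically isolated, its intrinsic metric agrees with that of $\BS^3$ on it, so $r_n$ stays $1$-Lipschitz and each virtual cop moves at legal speed on $G_n$; because $G_n$ is an isometric core, a real cop that ever comes $\BS^3$-close to the robber yields a virtual cop coming $G_n$-close to him. Even if all $k$ cops eventually cross the deep region into $H^{(n)}$, the robber only ever faces at most $k<c(G_n)$ virtual cops that trickle in over time, which a winning robber strategy against $k$ cops on $G_n$ can absorb, since adding and delaying cops only helps the pursuit. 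Hence $k$ cops never win; letting $k\to\infty$ gives $c(\BS^3)=\infty$, contradicting \Cnr{conj pseudo}.

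The main obstacle is exactly the tension between $\pi_1(\BS^3)=1$ and the many independent cycles forced by large cop number: there is no $1$-Lipschitz retraction of $\BS^3$ onto $G_n$, nor even onto the handlebody $H^{(n)}$ (whose nontrivial $\pi_1$ must die in $\pi_1(\BS^3)$), so the naive ``embed-and-retract'' argument is unavailable. The construction sidesteps this by using only the \emph{local} retraction $r_n\colon H^{(n)}\to G_n$ together with metric isolation of $H^{(n)}$; the delicate part is choosing the metric near $\partial H^{(n)}$ and inside the crumpled $2$-handles so that $H^{(n)}$ is genuinely convex in $\BS^3$, the core stays isometric, no accidental shortcuts appear through the handles or between distinct gadgets, and the infinitely many gluings still produce a compact geodesic simplicial pseudomanifold. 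A secondary point to verify is that the standard lower-bound strategies witnessing $c(G_n)>k$ remain valid against cops appearing at the gate at arbitrary times, which is immediate since this only weakens the pursuit.
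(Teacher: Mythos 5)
Your overall architecture is the right one and matches the paper's in spirit: install metric gadgets with diverging cop number inside $\BS^3$, make the simply-connecting filler ``deep'', and let the robber play against projected (virtual) cops. The paper's Counterexample~2 does exactly this, except that it starts from Mohar's surfaces $\cs_k$ rather than thickened graphs, and it only needs a sequence $X_k$ of homeomorphs of $\BS^3$ with $c(X_k)>k$ (as you note, the bound in \Cnr{conj pseudo} is a constant for $\BS^3$, so arbitrarily large finite cop numbers already suffice; infinite cop number on a single $\BS^3$ is a separate, harder statement, \Cr{cor inf}, and it cannot carry a genuine simplicial metric since infinitely many unit simplices accumulating at a point destroy compactness).

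The genuine gap is that the entire technical content of the argument is concentrated in the sentence asserting that the $2$- and $3$-handles can be metrized so that $H^{(n)}$ is ``metrically isolated --- hence convex'' and that this suffices. This is precisely the paper's main theorem (\Tr{lem hat}), and convexity of $H^{(n)}$ is \emph{not} enough on its own: the obstruction is not shortcuts but the legality of the virtual cops' moves. Your virtual cop is $r_n(\hbox{cop})$ inside $H^{(n)}$ and ``parked at the gate'' outside, but a gate is a whole attaching circle, which may be a long cycle of $G_n$; a real cop hovering just outside $\partial H^{(n)}$ can slide along that annulus, or dip in and out of the handle at far-apart points of it, forcing the virtual cop to jump farther than $\tau(n)$ in a single step. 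Nearest-point projection onto a convex subset of a length space is not $1$-Lipschitz in general, so ``each virtual cop moves at legal speed'' does not follow from isolation. The paper resolves this with a quantitative inductive invariant, \eqref{s c e}: the shadow cop is kept within distance $hei(c^n_i)$ of the trace $\pi(c^n_i)$, which is maintainable move-by-move because the filler is an $\ell_1$-cylinder of length $h\geq \max\tau+\diam(X)$ (so a cop that goes over the top re-enters at height at least $\diam(X)$, giving the shadow time to reposition anywhere). Your ``deep region'' plays the role of $h\geq \max\tau+\diam$, but without an analogue of the invariant the transition regime near the attaching annuli is unaccounted for, and the claim that cops ``appearing at the gate at arbitrary times'' only weaken the pursuit is not immediate --- it needs the shadow cop to already be at (or provably near) the correct point of the gate at the moment of re-entry. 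To complete the proof you would essentially have to prove \Tr{lem hat} for your handle metrics.
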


A \defi{simplicial pseudomanifold} is a simplicial complex in which each simplex is contained in an $n$-simplex, and each $(n - 1)$-simplex is contained in at most two $n$-simplices.

We provide counterexamples $X=X_k, k\in \N$ to \Cnr{conj pseudo}, where $X$ is a simplicial 3-complex homeomorphic to a 3-manifold, in fact to $\BS^3$. Thus $r_i=0$ \fe\ $i$, yet we have $c(X_k)>k$. In our counterexamples to both aforementioned conjectures  the robber can afford to choose a constant \defi{agility} function $\tau$.

\medskip
These counterexamples contrast the following interesting theorem of Ir\v{s}i\v{c}, Mohar \& Wesolek \cite{IrMoWeCop}: If $M$ is a compact manifold (of any dimension) with constant curvature $-1$, then $c(M) = 2$. This suggests that given a compact topological space $X$, the metric we put on $X$ can affect $c(X)$ more than its topology. 

If $X$ is a simplicial complex, we can endow it with the following natural metric $d$, called the \defi{simplicial metric}: we start by giving each $k$-simplex $S$ of $X$ a metric $d_S$ that makes $S$ isometric with the standard $k$-simplex, and let $d$ be the length metric on $X$ induced by the $d_S$. The aforementioned counterexamples to \Cnr{conj pseudo} can in fact be metrized like this. In particular, they have a piecewise linear metric. If we drop this restriction of having a simplicial metric, then we can endow $\BS^n, n\geq 3$ with a metric that makes its cop number infinite (\Cr{cor inf}). 

\medskip
The following is modest alternatives to Conjecture~\ref{conj fin}: 

\begin{problem}\label{pro fin}
Does every compact metrizable topological space $X$ admit a metric $d$ \st\ $c((X,d))$ is finite?
\end{problem}

The following could be a first step towards \Prb{pro fin}:
\begin{conjecture} \label{conj new}
Suppose that $X$ is a simplicial complex homeomorphic to a compact manifold, endowed with its simplicial metric. Then $c(X)<\infty$ (and even $c_0(X)<\infty$).
\end{conjecture}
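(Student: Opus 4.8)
The plan is to argue by a double induction: an outer induction on the dimension $n$ of the manifold and, for each fixed $n$, an inner induction on the number of top-dimensional simplices of the triangulation. To make the recursion self-contained I would prove the statement for the wider class of compact PL $n$-manifolds (possibly with boundary) carrying an arbitrary \emph{piecewise-Euclidean} metric, that is, one in which every top cell is isometric to some Euclidean simplex; this class contains all simplicial metrics and is closed under the cutting operation below. For $n\le 2$ the claim is already available: a compact piecewise-Euclidean $1$-manifold is a finite disjoint union of circles and arcs, on which boundedly many cops plainly win, and the case $n=2$ is covered by Mohar's bounds on the cop number of compact surfaces. If the triangulation has a single top simplex, then $X$ is isometric to a convex Euclidean body, and a single cop wins the non-strong game there by a turn-based adaptation of Besicovitch's pursuit-curve strategy for the lion-and-man problem: staying on the segment from the robber to a suitably chosen slowly moving point, the cop drives his distance to the robber below any prescribed threshold, using $\sum\tau(n)=\infty$.

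For the inductive step, choose a vertex $v$ that is missing from some top simplex — such a $v$ exists whenever there are at least two top simplices, since two distinct $n$-simplices cannot share all their vertices — and set $L:=\mathrm{lk}(v)$, a codimension-one sub-polyhedron that is a compact PL $(n-1)$-manifold and that separates $X$ into the closed star $\overline{\mathrm{st}}(v)$ and the antistar $\overline{X\setminus\mathrm{st}(v)}$. Both pieces are compact PL $n$-manifolds with inherited piecewise-Euclidean metrics, and both have strictly fewer top simplices than $X$. If $g(L)$ cops can \emph{guard} $L$ — meaning that after finitely many rounds the robber is caught the instant he touches $L$ — then those cops confine the robber to one of the two pieces, on which the remaining cops win by the inner induction; hence $c(X)$ is at most $g(L)$ plus the larger of the two pieces' cop numbers. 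Iterating the construction, every piece that ever occurs is a subcomplex of $X$, hence one of finitely many, and every link $L$ that occurs belongs to a finite family of PL $(n-1)$-manifolds each with finite cop number by the outer induction; so provided $g(L)<\infty$ whenever $c(L)<\infty$, unrolling the inner induction yields a finite bound on $c(X)$ — necessarily one that grows with the complexity of $X$, as the examples $X_k$ above show it must.

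Everything therefore reduces to a \emph{guarding lemma}: a two-sided codimension-one sub-polyhedron $W$ that is a compact piecewise-Euclidean PL $(n-1)$-manifold can be guarded by $g(W)<\infty$ cops, indeed by $c(W)+O(1)$ in good situations. This is the continuous analogue of the classical fact that the cops can guard any retract $H$ of a graph using $c(H)$ of them: $c(W)$ cops play the shadow game against the virtual robber sitting at $r(\text{robber})$, where $r$ is a $1$-Lipschitz retraction of a collar neighbourhood of $W$ onto $W$, while a bounded number of extra cops cover the robber's first approach to that collar; once a cop permanently occupies $r(\text{robber})$, the robber cannot reach $W$ without meeting him. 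Away from the $(n-2)$-skeleton the piecewise-Euclidean metric is locally flat and the codimension-one faces are flat, so nearest-point projection onto $W$ is $1$-Lipschitz exactly as in $\R^n$ and the shadow game proceeds there without difficulty. The real obstacle — the step I expect to carry the weight of the argument — is the behaviour near the $(n-2)$-skeleton, where the metric has cone-type singularities and radial or nearest-point projection onto a separating sphere may be slightly \emph{expanding} from the concave side, so that a cop cannot literally track the robber's projection. I would attempt to get around this either (a) by guarding, instead of $L$ itself, a small metric sphere $\partial B(v,\eps)$ and only from the outside — where, as $B(v,\eps)$ approximates the tangent cone over $L$ for small $\eps$, nearest-point projection onto the sphere is contracting — after a preliminary sweep by $O(1)$ cops that first evicts the robber from $B(v,\eps)$; or (b) by permanently reserving a bounded squad to patrol a thin neighbourhood of the $(n-2)$-skeleton, which is itself a compact polyhedron of dimension $n-2$ and hence controllable by a number of cops bounded through the same induction. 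The same scheme yields the corresponding statement for the variant $c_0$, since both the base case and the guarding lemma admit versions adapted to it at the cost of only boundedly many more cops.
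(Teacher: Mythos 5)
First, a point of order: the paper does not prove this statement. It is stated as Conjecture~\ref{conj new} and left open, with only the remark that it ``might be easy to prove using the well-known idea of guardable sets''. So there is no proof of the author's to compare yours against; what you have written is an attempt at the very programme the author gestures towards (cut along codimension-one subcomplexes, guard them, recurse), and it should be judged as such. As it stands it is a programme rather than a proof: by your own admission the step that ``carries the weight of the argument'' --- the guarding lemma --- is unresolved, and you offer two candidate workarounds without establishing either.

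The gaps in the guarding lemma are more serious than the cone-singularity issue you isolate. (i) The link $L=\mathrm{lk}(v)$ is in general not isometrically embedded in $X$: the intrinsic distance between two points of $L$ can exceed their distance in $X$, so the shadow robber $r(\text{robber})$ may move further in one step, measured in $L$'s own metric, than $\tau(n)$ permits, and the cops executing a winning strategy \emph{for the game on $L$} cannot track him. This is precisely why Aigner--Fromme guard geodesic (isometric) paths, and why Theorem~\ref{lem hat} of this paper goes to the trouble of attaching an $\ell_1$-metrized cylinder so that the projection $\pi$ is $1$-Lipschitz onto an isometric copy of $X$; no such structure is available for an arbitrary vertex link. (ii) In Mohar's game the robber does not traverse a path: he teleports to any point within distance $\tau(n)$. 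A separating codimension-one set therefore does not obstruct his moves, so ``the robber is caught the instant he touches $L$'' is not the right notion --- he need never touch $L$ to change sides. Worse, for the weak cop number the guards only come $\epsilon$-close to the shadow, which does not prevent a crossing at all; genuine confinement requires occupying the shadow (a $c_0$-type statement on $L$) \emph{and} guarding a thickened neighbourhood, and your induction would then have to propagate strong cop numbers, which you address only in a closing sentence. (iii) The conjecture concerns simplicial complexes homeomorphic to compact manifolds, which in dimension $\geq 5$ need not be PL; by the double-suspension phenomenon the link of a vertex need not be a manifold, so your outer induction over ``compact PL $(n-1)$-manifolds'' does not reach the statement as written. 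Your base cases and the overall architecture are reasonable, and the observation that any resulting bound must degenerate with the complexity of $X$ (because of the paper's own simplicially metrized surfaces $\cs_k$ with $c(\cs_k)>k$) is correct; but until a guarding lemma for non-isometric, singular, codimension-one subcomplexes is actually proved, the conjecture remains open.
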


This might be easy to prove using the well-known idea of guardable sets \cite{AigFroGam}.

\medskip
Recall that in our counterexamples to \Cnr{conj pseudo} we can achieve arbitrarily large $c(\BS^3)$ with a PL metric, or infinite $c(\BS^3)$ with an unrestricted metric. This raises

\begin{problem}\label{pro man}
Does every  finite-dimensional, compact, Riemannian manifold $M$ have finite $c(M)$?
\end{problem}

The results of this paper suggest that it is not possible to bound $c(M)$ here in terms of the volume of $M$ even if we fix the topology and diameter of $M$. But it would be interesting to obtain upper bounds on $c(M)$ involving other parameters, e.g.\ the curvature, injectivity radius, etc.

\section{Definitions}

\subsection{Metric spaces} \label{MS}

Let  $X=(X,d)$ be a metric space. The \defi{ball}  of radius $r$ around $x\in X$ is the set $\defi{B(x,r)}:=\{ y\in X \mid d(x,y)< r\}$.

Given a topological path $p: [0,1] \to X$, we define its \defi{length $\ell(p)$} to be the  supremum of $\sum_{0\leq i < n} d(p(t_i),p(t_{i+1}))$ over all finite sequences $0 = t_0 < t_1 < \ldots t_n = 1$. 

We say that $d$ is a \defi{length metric} (and that $(X,d)$ is a length space), if $d(x,y)= \inf \ell(p)$ holds \fe\ $x,y\in X$, where the infimum ranges over all topological \pths{x}{y}.
When $X$ is compact, then it is not hard to see that this infimum is always realised by some \pth{x}{y}, which path we call an \defi{$x$--$y$~geodesic}. In this case, i.e.\ when every $x,y\in X$ are connected by a geodesic, we say that $X$ is a \defi{geodesic metric space}.

If $d$ is not a length metric, we can still use it to define one when $X$ is path-connected: define the \defi{(intrinsic) length metric $d'$ induced by $d$} by $d'(x,y):= \inf \ell(p)$, where the infimum ranges over all topological \pths{x}{y}.

\subsection{The game} \label{the game}

Let $X=(X,d)$ be a compact, geodesic metric space, called the \defi{game space}, and let $k \geq 1$ be an integer. The \defi{Game of Cops and Robber} on $X$ with $k$ cops is defined as follows. The first player, who controls the robber, selects the initial positions for the robber and for each of the $k$ cops. Formally, this is a pair $(r^0,c^0)\in X^{k+1}$, where $r^0 \in X$ is the robber's initial position and $c^0 = (c^0_1, \ldots, c^0_k)\in X$  are the initial positions of the cops. The same player selects the \defi{agility function}, which is a map $\tau: \N\to \R_+$ and will specify the distances that the players can travel in each step. The agility function must allow for the total length travelled to be infinite, i.e.\ $\sum_{n \geq 1} \tau(n) = \infty$.
After the initial position and the agility function are chosen, the game proceeds as a discrete
game in consecutive steps. Having made $n-1$ steps ($n\geq 1$), the players have their positions
$(r^{n-1}, c^{n-1}, . . . , c^{n-1}) \in X^{k+1}$. In the $n$th step, 
the robber moves to a point $r^n\in X$ at distance at most $\tau(n)$ from his current position, i.e.\ $d(r^{n-1},r^n) \leq \tau(n)$. The destination $r^n$ is revealed to the second  player who is manipulating the cops. Then
each cop $C_i, i \in [k]$ is moved to a position $c^n_i$, also at distance at most $\tau(n)$ from its current position,
i.e.\ $d(c^{n-1}_i,c^n_i) \leq \tau(n)$. The game stops if $c_i^n = r^n$ for some $i \in [k]$. In that case, the value of the
game is 0 and we say that the cops have \defi{caught the robber}. Otherwise the game proceeds. If it never stops, the \defi{value} of the game is
$$v := \inf_{n\geq 0} \min_{i\in[k]} d(r^n, c^n_i).$$
If the value is 0, we say that the cops won the game; otherwise the robber won. Note that the cops can win even if they never catch the robber.

Given a game space $X$, let \defi{$c(X)$} be the smallest integer $k$ such that $k$ cops win the game on $X$ for every strategy of the robber. If such a $k$ does not exist, then we set $c(X) = \infty$. We call $c(X)$ the \defi{cop number} of $X$. Similarly we define the \defi{strong cop number} $c_0(X)$ as the smallest cardinal $k$ such that $k$ cops can always catch the robber.

\section{The counterexamples}

We now present our example of a game space $X$ with infinite cop number, disproving \Cnr{conj fin}:\\

{\bf Counterexample~1:} Let \seq{G}\ be a sequence of finite graphs with  $c'(G_n) \to \infty$, as provided e.g.\ by Aigner and Fromme \cite{AigFroGam}, where $c'(G)$ denotes the graph-theoretic variant of the cop number of \G, i.e.\ where the players must move to an adjacent vertex or stay still in each step. We can think of $G_n$ as an 1-complex, endowed with its simplicial metric as defined in the introduction. Let $G'_n$ be the re-scaling of $G_n$ where each edge is given length $\frac1{n \cdot \diam(G_n)}$, and note that $\diam(G'_n) \to 0$. Form a metric space $X$ by picking a vertex $w_n$ in each $G'_n$ and identifying them into one vertex $w$. Notice that $X$ is compact, as the  $G'_n$ converge to $w$. We claim that $c(X) = \infty$. Indeed, given $k\in \N$, the robber can win the game against $k$ cops as follows. He picks $n$ \st\ $c(G_n)>k$, and sets his agility function to be the constant equalling the length $\ell$ of the edges of $G'_n$. He then plays according to his winning strategy for the discrete game on $G_n$ as follows. 

Whenever any cop $C_i$  leaves $G'_n$, the robber pretends that $C_i$ idles at $w$. (The robber never leaves $G'_n$ himself.) Whenever $C_i$ moves to an interior point $x$ of an edge, the robber pretends that $C_i$ moved to the vertex closest to $x$, unless $x$ is the midpoint of an edge. In the latter case, the robber picks a geodesic $\gamma$ from the previous pretended position of $C_i$ to $x$. If $\gamma$ passes through a vertex $z$, then the  robber pretends that $C_i$ moved to $z$. Otherwise, he pretends that $C_i$ stays at his previous pretended position. If we start the game with pretended positions coinciding with the actual ones, it is straightforward to check by induction on the number of steps that 
\labtequ{pret}{the pretended position $p_i^n$ of $C_i$ at step $n$ is within distance at most $\frac1{2}$ from the actual position,}
and therefore that 
$p_i^n$ is in the closed neighbourhood of $p_i^{n-1}$ in $G_n$. Thus moving from one pretended position to the next is a legal move in the discrete game.


Therefore, by playing the game on $G'_n$ according to his strategy for $G_n$ based on the pretended cop positions, the robber will never get caught, and in fact will avoid ever being approached to distance less than $\frac1{2}$ by \eqref{pret}. \qed

\bigskip
Our counterexample disproving \Cnr{conj pseudo} is more involved, and needs some preparation. Given a game space $X$, and a compact subspace $S\subset X$, we will show that it is possible to `cap $S$ off' by attaching to it a metric space $\hhat(S)$ that does not decrease $c(X)$ and such that $S$ deformation-retracts to a point through $\hhat(S)$. For example, when $S$ is homeomorphic to $\BS^1$, then $\hhat(S)$ is a topological disc with boundary $S$. More generally,  $\hhat(S)$ is homeomorphic to the cone over $S$, but it is important to endow it with the right metric. We define it without reference to our game, or the ambient space $X$, as follows. We will only make use of this definition for $S$ homeomorphic to $\BS^{1}$ or $\BS^{2}$ later on, so the reader will lose nothing by assuming that this is the case.

\begin{definition} \label{def hat}
\normalfont Given a compact metric space $(S,d_S)$, and $h\in \R_+$, we define the \defi{$h$-hat $\hhat(h,S)$ over $S$} as the following metric space. We form $\hhat(h,S)$ out of two parts, the \defi{cylinder} and the \defi{top} (\fig{figShadow} below might be helpful). The cylinder \defi{$\cyl(h,S)$} is the cartesian product $S \times [0,h]$, metrized by the corresponding $\ell_1$ metric, i.e.\ $d_1((s,t),(s',t')):= d_S(s,s')+ |t-t'|$ for any $s,s'\in S$ and  $t,t'\in [0,h]$. The top \defi{$\ttop(h,S)$} is obtained from a cylinder $S \times [0,h+\diam(S))$ after adding a `cone point' $z$, and endowing it with a metric $d_2$ such that $d_2((s,t),z)$ converges to 0 as $t\to h+\diam(S)$ and $d_2((s,0),(s',0))= d_S(s,s')$ \fe\ $s,s'\in S$. This is easy to achieve with a definition similar to $d_1$ after appropriate scaling.

We obtain $\hhat(h,S)$ by identifying the top layer $S\times \{h\}$ of $\cyl(h,S)$ with the bottom layer $S\times \{0\}$ of $\ttop(h,S)$. We endow $\hhat(h,S)$ with the length metric $d'$ \defi{induced} by $d_1$ and $d_2$, i.e.\ we let $d'(x,y)$ be the infimum of the lengths of all \pths{x}{y}\ $p$, where the length of a subpath of $p$ contained in $\cyl(h,S)$, respectively $\ttop(h,S)$, is measured \wrt\ to $d_1$ (resp.\ $d_2$).  Note that $d'(s,s')=d_S(s,s')$ \fe\ $s,s'\in S$, and $\hhat(h,S)$ is compact.
\end{definition}

The following is immediate from the construction of $\hhat(h,S)$:
\begin{observation} \label{obs Sn}
If $S$ is homeomorphic to the $n$-sphere $\BS^n,n\geq 1$, then $\hhat(h,S)$ is homeomorphic to the $(n+1)$-disc $\{x\in \R^{n+1} \mid d(x,(0,\ldots,0) ) \leq 1\}$.
\end{observation}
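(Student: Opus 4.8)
The plan is to forget the metric $d'$ at first and show that, purely as a topological space, $\hhat(h,S)$ is the (unreduced) cone $CS:=(S\times[0,1])/(S\times\{1\})$ over $S$. Once this is done, the Observation reduces to two elementary facts about cones: that $CS$ depends functorially on $S$, and that $C\BS^n$ is homeomorphic to the $(n+1)$-disc $D^{n+1}$. Note that the hypothesis $S\cong\BS^n$ is not needed until the very last step; the identification $\hhat(h,S)\cong CS$ holds for an arbitrary compact $S$.

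To see that $\hhat(h,S)\cong CS$, I would first check that the top $\ttop(h,S)$ is a cone over $S$ with cone point $z$. By construction the layer $S\times\{t\}$ shrinks to $z$ as $t\to h+\diam(S)$, and by compactness of $S$ together with the form of $d_2$ (a rescaling of $d_S$ with scaling factor tending to $0$) this convergence is uniform in $s$; hence the upper collars $\{z\}\cup\{(s,t)\mid t>t_0\}$ form a neighbourhood basis at $z$, which is exactly the neighbourhood basis of the cone point in $CS$, while away from $z$ the obvious map is a homeomorphism onto $S\times[0,h+\diam(S))$. Thus $\ttop(h,S)\cong CS$. Now $\hhat(h,S)$ is $\cyl(h,S)=S\times[0,h]$ glued along $S\times\{h\}$ to the base $S\times\{0\}$ of $\ttop(h,S)$; concatenating the two parameter intervals gives $S\times[0,2h+\diam(S))$ together with the adjoined point $z$, and rescaling to $[0,1)$ identifies the glued space (with its quotient topology) once more with $CS$. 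Finally a homeomorphism $\phi\colon S\to\BS^n$ induces a homeomorphism $C\phi\colon CS\to C\BS^n$, and the map $(x,t)\mapsto(1-t)x$ (regarding $\BS^n\subset\R^{n+1}$) is a continuous bijection from the compact space $C\BS^n$ onto the Hausdorff space $D^{n+1}$, hence a homeomorphism; composing these maps yields $\hhat(h,S)\cong D^{n+1}$.

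The one point that genuinely needs justification, rather than merely being asserted as ``immediate from the construction'', is that the length metric $d'$ on $\hhat(h,S)$ induces exactly the glued topology used above (a priori an induced length metric can be strictly finer than the one it is built from). For a point $x$ other than $z$, pick a small ball around $x$ in the relevant metric $d_1$ or $d_2$: any path leaving that ball has $d_i$-length at least its radius, so it cannot be shortest between two points well inside the ball, and within such a ball $d'$ coincides with $d_1$ (resp.\ $d_2$), which is already a length metric there; hence the $d'$- and $d_i$-neighbourhoods of $x$ agree. Near $z$ the same argument works once one observes that a path dipping from near $z$ down into $\cyl(h,S)$ and back is far too long to compete. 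Since $\cyl(h,S)$ and $\ttop(h,S)$ are closed in $\hhat(h,S)$ and meet in $S$, it follows that the $d'$-topology is the glued one, so the identification above is indeed a homeomorphism. I expect this compatibility check to be the most tedious part, but it is entirely routine.
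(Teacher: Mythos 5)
Your proof is correct and is precisely the argument the paper leaves implicit: Observation~\ref{obs Sn} is stated with no proof beyond the phrase ``immediate from the construction'', and the intended reasoning is exactly your identification of $\hhat(h,S)$ with the cone $CS$ followed by the standard homeomorphism $C\BS^n\cong D^{n+1}$ (continuous bijection from a compact space to a Hausdorff space). The only soft spot is one you inherit from Definition~\ref{def hat} itself rather than introduce: your claim that $d'$ locally coincides with $d_1$ (and hence that the $d'$-topology is the glued one) tacitly requires $d_S$ to be a length metric on $S$ --- for a compact $(S,d_S)$ with highly non-intrinsic metric, e.g.\ a non-rectifiable embedded sphere with the ambient Euclidean metric, every finite-length path between distinct bottom-layer points must pass through $z$, so the induced length metric $d'$ makes the bottom layer discrete and the Observation fails as literally stated --- but the paper makes the same unstated assumption (cf.\ its remark that $d'(s,s')=d_S(s,s')$ and that $\hhat(h,S)$ is compact), and the assumption holds in every application, so your write-up is at least as rigorous as the statement it proves.
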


\begin{definition}\label{def Xh}
Given a game space $(X,d)$ and a compact subspace $S\subseteq X$, we let \defi{$X_{h,S}$} be the metric space obtained from $X \cup \hhat(h,S)$ by identifying $S\subseteq X$ with the bottom layer $S\times \{0\}$ of $\cyl(h,S)$. We endow $X_{h,S}$ with the length metric induced by $d$ and $d'$ as above. 
\end{definition} 

Note that $X_{h,S}$ is still compact, and that its metric is an extension of $d$. Using the compactness, it follows that $X_{h,S}$ a geodesic metric space, i.e.\ a game space. A similar, but much more restricted, construction was used by Mohar \cite[\S 8.1]{MohGam}.

\begin{theorem} \label{lem hat}
Let $X$ be a game space and let $S\subseteq X$ be compact. 
Then there is $h\in \R_+$ such that $c(X_{h,S})\geq c(X)$. 
\end{theorem}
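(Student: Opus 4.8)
The plan is to pick $h$ large and show that capping $S$ off cannot help the cops: the robber refuses to enter $\hhat(h,S)$ and plays a winning strategy on $X$ against well-chosen \emph{shadows} of the cops.

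Fix $h\ge 2\,\diam(X)$; this is finite as $X$ is compact, and any larger value works as well. Recall that the metric $d'$ of $X_{h,S}$ extends $d$. It suffices to show that for every $k<c(X)$ the robber wins against $k$ cops on $X_{h,S}$, for then $c(X_{h,S})>k$ for all $k<c(X)$, i.e.\ $c(X_{h,S})\ge c(X)$. Fix such a $k$ and a winning robber strategy $\Sigma$ on $X$ against $k$ cops, with agility function $\tau$ and initial configuration $(r^0,c^0)\in X^{k+1}$. For $x\in X_{h,S}$ let $\pi(x)\in X$ be $x$ itself when $x\in X$, and otherwise the $S$-coordinate of $x$ (the point of $S$ ``directly below'' $x$). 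On $X_{h,S}$ the robber adopts $\tau$ and the starting configuration $(r^0,c^0)$, keeps $r^n\in X$ for all $n$, and maintains \emph{shadows} $p^n_i\in X$ by $p^0_i:=c^0_i$ and: $p^n_i$ is reached from $p^{n-1}_i$ by moving $\min\!\bigl(\tau(n),\,d(p^{n-1}_i,\pi(c^n_i))\bigr)$ along a geodesic of $X$ towards $\pi(c^n_i)$. Then $(p^n_i)_n$ is a legal cop play on $X$ with agility $\tau$; the robber plays $\Sigma$ against this play, executing in $X_{h,S}$ the moves $\Sigma$ prescribes in $X$, which is legal since $d'$ extends $d$. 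As $\Sigma$ wins, the induced play on $X$ has positive value, so there is $v>0$ --- necessarily $v\le\diam(X)$ --- with $d(r^n,p^n_i)\ge v$ for all $n,i$.

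Everything reduces to the invariant that $d\bigl(p^n_i,\pi(c^n_i)\bigr)\le d'(c^n_i,X)$ for all $n$ and $i$: the shadow lags behind the true projection by no more than the cop's ``depth'' in the hat. Granting it: if $d'(c^n_i,X)<h$ then $c^n_i$ lies in $X$ or in the cylinder part $\cyl(h,S)$, so the $\ell_1$ metric of $\cyl(h,S)$ gives $d'(r^n,c^n_i)=d\bigl(r^n,\pi(c^n_i)\bigr)+d'(c^n_i,X)\ge\bigl(d(r^n,p^n_i)-d(p^n_i,\pi(c^n_i))\bigr)+d'(c^n_i,X)\ge v$; and if $d'(c^n_i,X)\ge h$ then $d'(r^n,c^n_i)\ge d'(c^n_i,X)\ge h>v$. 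Either way no cop ever comes within $v$ of the robber, so the robber wins. The invariant is proved by induction on $n$, and this is the step I expect to be the main obstacle, as it is exactly where the size of $h$ and the precise metric on $\hhat(h,S)$ are used. It is trivial while $c^n_i\in X\cup S$, where $p^n_i=\pi(c^n_i)$; while the cop moves inside $\cyl(h,S)$ the projection $\pi$ is $1$-Lipschitz (a coordinate projection of an $\ell_1$-product) and one checks that the target $\pi(c^n_i)$ moves by at most $\tau(n)+\bigl(d'(c^n_i,X)-d'(c^{n-1}_i,X)\bigr)$ per step, so the shadow, moving at speed $\tau(n)$ towards it, preserves the inequality; and whenever $d'(c^n_i,X)\ge h$ the invariant holds automatically, since $d(p^n_i,\pi(c^n_i))\le\diam(X)\le h$, and it is re-established the instant the cop comes back to depth below $h$, because that depth is then either $\ge\diam(X)\ge d(p^n_i,\pi(c^n_i))$ or the step was long enough that $\tau(n)>h-\diam(X)\ge\diam(X)$ and the shadow reaches its target outright. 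In particular the cone-point region of $\hhat(h,S)$, where $\pi$ is discontinuous, never causes trouble: a cop that close to the cone point is at distance $\ge h>\diam(X)$ from $X$, and any cheap horizontal travel it makes there is paid for by enough vertical travel for the shadow to re-synchronise before the cop can threaten the robber again. Checking that the hat metric --- its $\ell_1$ cylinder and the extra height $\diam(S)$ built into $\ttop(h,S)$ --- makes all of this work for $h$ large is the real task.
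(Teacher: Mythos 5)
Your proposal is correct and follows essentially the same route as the paper's proof: shadow cops confined to $X$, the invariant that the shadow lags behind the cop's projection by at most the cop's depth in the hat, and an induction over the same case split (in $X$, in the cylinder, in the top, and transitions between them), concluded by the same triangle-inequality argument. The only substantive difference is your choice $h\ge 2\,\diam(X)$ independent of $\tau$ (recovering the paper's Case~2 via the observation that any step long enough to bring a cop out of the top also lets the shadow reach its target outright), whereas the paper takes $h=\max_n\tau(n)+\diam(X)$ after normalising $\tau$ to be decreasing; both choices work.
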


Before proving this let us see how it can be used to disprove Mohar's \Cnr{conj pseudo}:\\ 

{\bf Counterexample~2:} Given $k\in \N$, we let $\cs_k$ be an appropriately metrized surface with $c(\cs_k)>k$, which have been constructed by Mohar \cite{MohGam}. Applying \Tr{lem hat} we will transform $\cs_k$ into a homeomorph $X$ of $\BS^3$ with $c(X)\geq c(\cs_k)$, which disproves \Cnr{conj pseudo}.

For this, embed  $\cs_k$ topologically into $\BS^3$ in a standard way, i.e.\ so that the image ---which we still denote by $\cs_k$--- separates $\BS^3$ into two components $C_1,C_2$, each homeomorphic to a solid surface. Let $T'$ be a triangulation of $\cs_k$, and let $T$ be a triangulation of $\BS^3$ extending $T'$ \st\ all 0-cells of $T$ lie on $\cs_k$. 

By repeatedly applying \Tr{lem hat} to the 1-cells, then the 2-cells, and then the 3-cells of $T\sm T'$, we will produce a homeomorph of $\BS^3$, while keeping the cop number greater than $k$ throughout. To make this precise, pick an 1-cell $e$ of $T\sm T'$, with end-vertices $x,y$ say, and apply \Tr{lem hat} with $S=\{x,y\}$ and a large enough $h$. Note that $S$ is homeomorphic to $\BS^0$, and therefore $\hhat(h,S)$ is homeomorphic to $e$ by Observation~\ref{obs Sn}.

By repeatedly applying \Tr{lem hat} to the remaining 1-cells  of $T \sm T'$, we obtain a homeomorph $\cs^1$ of the union of $\cs_k$ with the 1-skeleton of $T$, with cop number at least that of $\cs_k$. After this, we go through the 2-cells of $T \sm T'$, and proceed similarly: for each such 2-cell $f$, we  apply \Tr{lem hat} with $S$ being the boundary of $f$, which is homeomorphic to $\BS^1$, and therefore $\hhat(h,\partial f)$ is homeomorphic to a disc. This yields a homeomorph $\cs^2$ of the union of $\cs_k$ with the 2-skeleton of $T$. Finally, we proceed similarly with the 3-cells of $T$, to obtain a homeomorph of $\BS^3$ with cop number at least that of $\cs_k$. 

\medskip
\Cnr{conj pseudo} does not explicitly clarify whether we are allowed to put an arbitrary geodesic metric on $X$ (which seems to me to be the intended meaning), or whether we must use the metric induced by that of the standard simplex. Without any restriction on the metric, we can even achieve infinite cop number; see \Sr{sec common}. But our construction can be easily modified to yield counterexamples even with the aforementioned restriction. For this, we start by constructing our own $\cs_k$ as follows. Let $G_k$ be a graph with graph-theoretic cop number $c'(G_k)$ greater than $k$. Mohar \cite[Theorem 8]{MohGam} proved that we still have  $c(G_k)>k$ in our metric-space version of the game when we think of $G_k$ as a 1-complex with each 1-cell isometric to the real interval $[0,1]$ (and this holds if we force the constant agility function $\tau \equiv 1$). Let $f$ be an embedding of $G_k$ into an orientable surface $\cs'_k$ of minimal possible genus. A classical result of Youngs \cite{Youngs} says that each face $F$ of $f$ must be homeomorphic to an open disc. We may assume that the closure $\cls{F}$ of $F$ is homeomorphic to a closed disc, because otherwise we can embed a few edges inside $F$, with end-vertices in $\partial F$, to separate $\cls{F}$ into closed discs. By \Tr{lem hat}, if we  subdivide these new edges into long enough paths, then $c(G'_k)\geq c(G_k)$ holds for the resulting graph $G'_k$. We then apply \Tr{lem hat} to each face-boundary $S$ of the resulting embedding of $G'_k$. We thereby modify the construction of $\hhat(h,S)$, to make it isometric with a simplicial complex each simplex of which bears the standard metric. This is possible by cutting $\hhat(h,S)$ into small enough 2-simplices. Hereby we use the fact that the cylinder $S \times  [0,r]$ where $S$ is a circle of integer length $n$, and $r$ the height of an equilateral triangle of side-length 1, can be tiled into $2n$ equilateral triangles. Notice that we are free to choose $h$ to be a multiple of $r$ in any application of \Tr{lem hat}. 

After doing so for every face, we obtain a simplicial complex homeomorphic to $\cs'_k$, with the desired kind of metric, i.e.\ a simplicial metric. 

We then continue with our construction from above, again modifying the construction of $\hhat(h,S)$ to make it isometric with a simplicial complex; this is possible by Remark~3 below. This completes our counterexamples to \Cnr{conj pseudo}. \qed

\medskip
{\bf Remark 1:} We can continue applying \Tr{lem hat} in a similar fashion, to obtain a homeomorph $X_n$ of $\BS^n, n>3$ with arbitrarily large $c(X_n)$: note that $\BS^{n-1}$ separates $\BS^n$ into two $n$-discs, and so applying 
\Tr{lem hat} twice with $S= X_{n-1}$ we obtain a homeomorph $X_n$ of $\BS^n$ with $c(X_n)\geq c(X_{n-1})$.
\medskip

{\bf Remark 2:} 
In Counterexample~2 the robber can afford to choose a constant agility function $\tau$. This is because in each application of \Tr{lem hat} the robber just applies the same $\tau$ on $X_{h,S}$ that he was using for $X$, and he starts with $\tau \equiv 1$ on $G_k$.

\section{Proof of \Tr{lem hat}}
We now prove our main technical tool, \Tr{lem hat}.

\medskip
Let $k< c(X)$, and let $\tau$ be an agility function that the robber can use to win against $k$ cops on $X$. Mohar \cite{MohMin} proves that it is always in favour of the robber to use decreasing agility functions, and so we may assume that $\tau$ is decreasing, and in particular $M:= \max_{n\in \N} \tau(n)$ exists and is finite. We set $h:= M+\diam(X)$ and construct $X':=X_{h,S}$ as in \Dr{def Xh} (any larger $h$ would do as well). 

Let us first introduce some notation. Each point $x$ in $\hhat(h,S)$ except for the cone point $z$ comes with two coordinates $(s,h)$. We define its \defi{trace} $\pi(x):=s$ and \defi{height} $hei(x):=h$. We extend $\pi$ and $hei$ to $X$ by letting $\pi(s)=s$ and $hei(x)=0$ there, and we also let $hei(z):= 2h$ and $\pi(z)=s$ for an arbitrarily chosen $s\in X$. Easily, 
\labtequ{hei}{$d(x,\pi(x))= hei(x)$ \fe\ $x\in X \cup \cyl(h,S)$.}
Each topological path $p: [0,1] \to X'- z$ gives rise to a path $\pi \circ p$ in $X$. It is straightforward to check that 
\labtequ{ell}{$\ell(\pi \circ p) \leq \ell(p)$.}
We will modify the robber's winning strategy against $k$ cops on $X$ into such a strategy for $X'$; this implies that $k< c(X')$, proving our statement. 

For this, we will play a variant of the game, where in addition to the $k$ cops $C_i$ we have $k$ \defi{shadow cops} $C'_i$, the positions $s^n_i$ of which are chosen by a third player, the (robber's) \defi{accomplice}. The moves of the accomplice can be thought of as part of the robber's strategy. The shadow cops will help us prove that the (true) cops win nothing by placing themselves in $X'\sm X$. The robber and shadow cops will only move within $X$. 

This game evolves as follows. The robber initiates the game following his winning strategy against $k$ cops on $X$, with the agility function $\tau$ chosen above, placing each shadow cop $C'_i$ at the same position $s^0_i=c^0_i$ as $C_i$.  From then on, the robber disregards the true cops $C_i$, and pretends to be playing against the shadow cops $C'_i$, playing according to his aforementioned strategy.\footnote{There is a well-known cop strategy of Aigner and Fromm \cite{AigFroGam} for guarding a shortest path in a graph, and we have stolen, and adapted, this strategy on behalf of the robber.} (Thus the robber ignores $X'\sm X$.) 

The cops $C_i$ disregard the shadow cops, and play on $X'$ according to their favourite strategy for trying to win against the robber. 

After each cop move, from $c^{n-1}_i$ to $c^{n}_i$ say, the accomplice moves $C'_i$ as follows: 
\begin{enumerate}
\item \label{a i} if $c^{n}_i \in X \cup \cyl(h,S)$, then move within $X$ as close to $C_i$ as possible, i.e.\ let $s^{n}_i$ be a point in the closed ball $\cls{B(s^{n-1}_i ,\tau(n))}$  in $X$ minimising the distance to $\pi(c^{n}_i)$ (it might be that $s^{n}_i \in X\sm S$); 
\item \label{a ii} if $c^{n}_i \in \ttop(h,S)$, then sit, i.e.\ let $s^{n}_i=s^{n-1}_i$.
\end{enumerate}

This completes the description of the game play. We claim that 
\labtequ{s c}{if $c^{n}_i \in X$, then $s^{n}_i=c^{n}_i$}
holds \fe\ $i\in [k]$ and \nin. This claim already implies that the true cops cannot catch the robber, i.e.\ that $c_0(X')\geq k$, because the robber follows a strategy that guarantees he is not caught by the shadow cops. In order to show our stronger statement that $c(X')\geq k$, we strengthen claim \eqref{s c} as follows:
\labtequ{s c e}{$d(s^{n}_i,\pi(c^{n}_i))\leq hei(c^{n}_i)$ holds \fe\ $i\in [k]$ and \nin.}
To see that this implies $c(X')\geq k$, notice first that as the shadow cops are restricted to $X$, and the robber follows his winning strategy for $X$ against them, there is $\epsilon>0$ \st\ no shadow cop ever comes $\epsilon$-close to the robber, i.e.\ $d(s_i^n,r^n)> \epsilon$. 
If a true cop comes $\epsilon/3$-close to the robber, i.e.\ $d(c_i^n,r^n)\leq \epsilon/3$,  for some $\epsilon<h$, then $d(c_i^n,X)=hei(c_i^n)\leq \epsilon/3$. But then  \eqref{s c e} yields $d(s^{n}_i,\pi(c^{n}_i))\leq \epsilon/3$, which combined with $d(c^{n}_i, \pi(c^{n}_i)) = hei(c^{n}_i)$ from \eqref{hei}, and the triangle inequality, contradicts that $d(s_i^n,r^n)> \epsilon$ (\fig{figShadow}). This means that the robber wins against the true cops in $X'$.

\begin{figure} 
\begin{center}
\begin{overpic}[width=.45\linewidth]{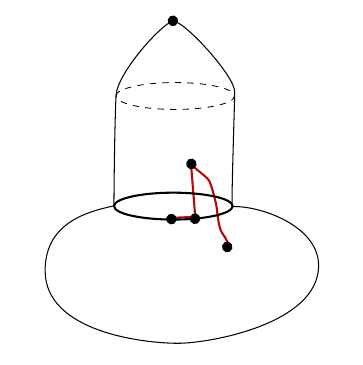} 
\put(65,30){$r^n$}
\put(45.7,33){$\pi(c^{n}_i)$}
\put(43,41.5){$s^n$}
\put(51,58){$c^{n}_i$}
\put(48,98){$z$}
\put(33,35){$S$}
\put(10,10){$X$}
\put(12,85){$\ttop(h,S)$}
\put(7,58){$\cyl(h,S)$}
\end{overpic}
\end{center}
\caption{Using $d(s_i^n,r^n)> \epsilon$ to prove $d(c_i^n,r^n)> \epsilon/3$.} \label{figShadow}
\end{figure}

\medskip
Thus it only remains to prove \eqref{s c e}, and we do so  for each $i\in [k]$ by induction on $n$. It is true for $n=0$ as $s^{0}_i= c^0_i= \pi(c^{0}_i)$ by the robber's choice of initial positions.

Assuming \eqref{s c e} holds for $n-1$, we prove it for $n$ by distinguishing the following cases.
\begin{itemize}
	\item[Case 1:] \label{O i}  $c^{n-1}_i \in X$. \\
	In this case our inductive hypothesis yields $s^{n-i}_i= c^{n-i}_i$. Let $p$ be a $c^{n-1}_i$--$c^{n}_i$~geodesic in $X'$. By \eqref{ell}, we have $\ell(\pi \circ p) \leq \ell(p)$, and therefore\\ $d(s^{n-1}_i, \pi(c^{n}_i)) \leq d(c^{n-1}_i,c^{n}_i)$ because $\pi(c^{n}_i)$ is the endpoint of $\pi \circ p$. Thus it would be an allowable move for $C'_i$ to move to $\pi(c^{n}_i)$, and therefore $C'_i$ chose $s^{n}= \pi(c^{n}_i)$, and so $d(s^{n}_i,\pi(c^{n}_i))=0$. Here we used the fact that $c^{n}_i\in X \cup \cyl(h,S)$ because $c^{n-1}_i \in X$ and $h>M\geq \tau(n)$.
	\item[Case 2:] \label{O ii}  $c^{n-1}_i \in \ttop(h,S)$ or $c^{n}_i \in \ttop(h,S)$.\\
	In the former case we have 
	$hei(c^{n-1}_i)\geq h \geq \tau(n)+ \diam(X)$. This implies $hei(c^{n}_i)\geq \diam(X)$, as a player cannot decrease his height by more than the distance they are  allowed to travel in a step. The latter inequality also holds trivially if $c^{n}_i \in \ttop(h,S)$. Thus \eqref{s c e} holds since $s^{n}_i,\pi(c^{n}_i)\in X$.
	\item[Case 3:] \label{O iii} $c^{n-1}_i \in \cyl(h,S)$ and $c^{n}_i \in X$.\\ 
	Let again $p$ be a $c^{n-1}_i$--$c^{n}_i$~geodesic in $X'$, and let $x$ be the first point of $p$ in $X$. Let $p_1, p_2$ be the two subpaths into which $x$ separates $p$. We claim that $d(s^{n-1}_i, c^n_i) \leq \ell(p) \leq \tau(n)$, and so $C'_i$ must choose $s^{n}_i = c^{n}_i$. 
	
To see this, note first that $p$ avoids $\ttop(h,S)$, because the latter is at distance more than $\tau(n)$ from the endpoint of $p$. This implies that 
	$$d(c^{n-1}_i,x)\geq d_1(c^{n-1}_i,x)= hei(c^{n-1}_i)+d(\pi(c^{n-1}_i), x)$$ by the definitions of our metrics. By the inductive hypothesis, we have $hei(c^{n-1}_i)\geq d(s^{n-1}_i,\pi(c^{n-1}_i))$, and combining this with the above inequality we obtain $d(c^{n-1}_i,x) \geq d(s^{n-1}_i,\pi(c^{n-1}_i))+d(\pi(c^{n-1}_i), x)$.
By the triangle inequality we have $d(s^{n-1}_i, c^n_i) \leq d(s^{n-1}_i, \pi(c^{n-1}_i)) + d(\pi(c^{n-1}_i),x) + d(x,  c^{n}_i)$, and combining with the previous inequality we obtain $d(s^{n-1}_i, c^n_i) \leq d(c^{n-1}_i,x) + d(x,  c^{n}_i)$. Since $p$ is a geodesic, the latter sum equals $\ell(p)$, proving our claim. Thus \eqref{s c e} holds in this case as well.	
	\item[Case 4:] \label{O iv} $c^{n-1}_i, c^{n}_i\in \cyl(h,S)$.\\
	In this case we have \mymargin{$\geq$}
$$d(c^{n-1}_i,c^{n}_i)\geq d_1(c^{n-1}_i,c^{n}_i)= | hei(c^{n-1}_i) - hei(c^{n}_i) | +d(\pi(c^{n-1}_i), \pi(c^{n}_i)).$$
In other words, the distance that $C_i$ traveled in step $n$ is at least the sum of the differences that his move has made to each of the two sides of inequality \eqref{s c e}. Thus $C'_i$ can choose a  $s^{n-1}_i$--$\pi(c^{n}_i)$~geodesic $g$, and move to a point $s^{n}_i$ on $g$ at distance $d(c^{n-1}_i,c^{n}_i)$ from $s^{n-1}_i$, to ensure that  \eqref{s c e} remains valid.\\  \qed
\end{itemize} 

{\bf Remark 3:} The conclusion of \Tr{lem hat} remains valid as is if we `inflate' our metric $d$ on $\hhat(h,S)$,  i.e.\ if we replace it with any metric $d^+$ on $X'$ with the following properties: (A) $d^+(x,y)\geq d(x,y)$ \fe\ $x,y\in X'$, (B)  $d^+(x,y)=d(x,y)$ \fe\ $x,y\in X$, and (C) 
 $d^+$ generates the same topology as $d$. Indeed, such a  change of metric restricts the moves of the true cops, while the robber and accomplice can ignore the change of metric since they are moving within $X$ only.

\section{A common counterexample} \label{sec common}

We now go one step further from Counterexamples~1 \& 2, and produce a game space homeomorphic to $\BS^3$ with infinite cop number, thus obtaining a strong simultaneous counterexample to Conjectures~\ref{conj fin} and \ref{conj pseudo}:
\begin{corollary} \label{cor inf}
There is a game space $X$ homeomorphic to $\BS^3$ with $c(X)=\infty$. 
\end{corollary}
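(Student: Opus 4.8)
The plan is to merge the gluing trick of Counterexample~1 with the capping construction of Counterexample~2: I will build a single space $X\cong\BS^3$ that contains, as mutually ``far apart'' isometric subspaces, the whole sequence of Mohar's surfaces $\cs_j$ with $c(\cs_j)>j$, and then show that against any finite number of cops the robber can always retreat into one of these surfaces and win there.

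\emph{The construction.} Fix $p\in\BS^3$ and pairwise disjoint closed $3$-balls $D_j\subseteq\BS^3\sm\{p\}$ with $\diam(D_j)\to 0$ converging to $p$ (every neighbourhood of $p$ contains all but finitely many $D_j$). Inside $D_j$ place a topologically embedded surface $\Sigma_j\cong\cs_j$ separating $D_j$ into two solid pieces, exactly as in Counterexample~2, and fix a countable simplicial structure $T$ on $\BS^3$ whose cells accumulate only at $p$, which restricts to a triangulation $T'_j$ of each $\Sigma_j$ and all of whose $0$-cells lie on $\bigcup_j\Sigma_j$. Now start from the disjoint union $\bigsqcup_j\cs_j$, each $\cs_j$ metrized so that $\diam(\cs_j)\to 0$ and carrying Mohar's robber-winning surface game with a constant agility $\tau_j\equiv\lambda_j$ (a rescaling of $\tau\equiv 1$, cf.\ Remark~2). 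Apply \Dr{def Xh} once for every cell $\sigma$ of $T$ not contained in any $T'_j$, processing cells in order of increasing dimension, always with $S=\partial\sigma$ and with a height $h_\sigma$ small enough that the new hat stays ``over'' $\sigma$; choose the parameters $\lambda_j,\diam(\cs_j),h_\sigma$ to shrink fast enough towards $p$ that the resulting space $X$ is compact. By \Or{obs Sn} each hat is a homeomorph of the corresponding cell, so $X\cong\BS^3$; being a compact length space it is geodesic, hence a game space. By the property $d'(s,s')=d_S(s,s')$ of \Dr{def hat}--\Dr{def Xh}, each $\cs_j$ is an isometric subspace of $X$, and it is ``isolated'': a neighbourhood $U_j$ of $\cs_j$ is precisely the union of the hats over the cells meeting $D_j$, which is itself obtained from $\cs_j$ by finitely many applications of \Dr{def Xh}, and $\delta_j:=d(\cs_j,X\sm U_j)>0$.

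\emph{The cop number is infinite.} Given $k\in\N$, the robber picks $j>k$, chooses the agility $\tau_j$, and plays forever inside $\cs_j$ following his winning strategy for the surface game there. To handle the cops I would prove and invoke a localised version of \Tr{lem hat}: \emph{if a game space $W$ has an isometric sub-game-space $Z$, a neighbourhood $U\supseteq Z$ is obtained from $Z$ by finitely many applications of \Dr{def Xh}, and $d(Z,W\sm U)>0$, then $c(W)\ge c(Z)$ via a robber strategy that keeps him in $Z$.} Its proof is exactly the accomplice/shadow-cop coupling of the proof of \Tr{lem hat} run inside $U$ --- so every true cop that ever enters $U$ is tracked by a shadow cop confined to $Z$ --- combined with the trivial observation that a true cop outside $U$ stays at distance $\ge d(Z,W\sm U)$ from the robber. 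Applying this with $W=X$, $Z=\cs_j$, $U=U_j$: since $c(\cs_j)>j>k$, no $k$ shadow cops ever come within some $\eps_j>0$ of the robber on $\cs_j$, hence no $k$ true cops come within $\min(\eps_j,\delta_j)>0$ of him on $X$. As $k$ was arbitrary, $c(X)=\infty$.

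The main obstacle --- and essentially the only new ingredient --- is isolating that localised form of \Tr{lem hat}, since the proof in the previous section is written for a single hat-attachment $X_{h,S}$, whereas here the robber must win on a space built from infinitely many hat-attachments while deliberately restricting himself and his accomplice to one gadget $\cs_j$ lying far inside $X$. Once the localised lemma is stated, its verification is a line-by-line rerun of the four-case induction of the previous section, with ``$X$'' replaced by ``$Z$'' and ``$X'$'' by ``$U$''. Everything else --- the embeddings $\Sigma_j\subseteq D_j$, the simplicial structure $T$, and the decay rates of $\lambda_j,\diam(\cs_j),h_\sigma$ needed for compactness --- is routine and mirrors Counterexample~2 together with the compactness check in Counterexample~1.
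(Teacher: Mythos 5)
Your overall strategy is the same as the paper's: shrink a sequence of gadgets with diverging cop numbers so that they accumulate at a single point, argue via a localised shadow-cop argument that the robber can hide in one gadget chosen after the number of cops is fixed, and use hat-attachments to realise the ambient space as $\BS^3$. The paper assembles the gadgets differently, and more economically: it first turns each surface $\cs_k$ into a complete homeomorph $X_k$ of the closed $3$-disc by \emph{finitely} many applications of \Tr{lem hat} (each applied to an honest game space), then strings the rescaled $X_k$ along the equator $\BS^1$ exactly as in Counterexample~1 (the leftover arc of $\BS^1$, including the accumulation point, is simply included in the space), and finally caps off with only four further hats over $C_1,C_2,\partial F_1,\partial F_2$. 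Your single global ``triangulation of $\BS^3$ accumulating only at $p$ with all $0$-cells on $\bigcup_j\Sigma_j$'' cannot exist as literally described ($p$ lies in no cell closure of such a complex), and forces you to control an infinite limit of hat-attachments all at once, including hats over cells joining distinct gadgets, so that your $U_j$ is not in fact obtained from $\cs_j$ alone by applications of \Dr{def Xh}. The localised lemma you isolate as ``the only new ingredient'' is indeed the crux, and it is also what the paper invokes (``following the arguments of the proof of \Tr{lem hat}, with $S$ being a pair of points $\partial X_k\cap\BS^1$''), so on that point you and the paper agree.

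There is one concrete error to fix: you choose each height $h_\sigma$ ``small enough that the new hat stays over $\sigma$,'' but the proof of \Tr{lem hat} needs $h$ \emph{large}, namely $h\ge M+\diam(X)$ where $M$ bounds the agility. This lower bound is what makes Case~2 of the induction work (a cop entering the top is so high that the invariant holds trivially) and is equally what your localised lemma needs: a cop re-entering $U_j$ from $X\setminus U_j$ must have been at distance at least $\lambda_j+\diam(\cs_j)$ from $\cs_j$, or the shadow cop cannot resynchronise. With ``small'' heights a cop could shortcut through a top, or hop in and out of $U_j$, faster than the shadow cop can track. The fix is available --- since $\lambda_j\to0$ and $\diam(\cs_j)\to0$, the \emph{required} heights near $D_j$ also tend to $0$, so compactness and the lower bound are compatible --- but as written your construction imposes the wrong inequality on $h_\sigma$ and the shadow-cop induction would fail.
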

\begin{proof}[Proof (sketch)]
We modify the construction in Counterexample 2, to produce game spaces $X_k$ homeomorphic to the closed 3-disc instead of $\BS^3$, by letting $T$ be a triangulation of a portion of $\BS^3$ instead of all of it. By rescaling the metric of $X_k$, we may assume that $\lim_{k\to \infty} \diam(X_k) = 0$. 

Embed all $X_k, k\in \N$ topologically into $\BS^3$, so that the images are disjoint, and each $X_k$ intersects the `equator' $\BS^1 \subset \BS^3$ along a subarc. Let us arrange these images so that they have a single accumulation point on $\BS^1$. Let $U$ be the union of $\bigcup_{k\in \N} X_k$ with the points of $\BS^1$ that are disjoint from the image of $\bigcup_{k\in \N} X_k$. It is easy to see that the intrinsic length metric $d$ on $U$ induced by the metrics of $\BS^1$ and the $X_k$ is a geodesic metric, and that $d$ coincides with  the original metric of $X_k$ when restricted to that subspace. Therefore, following the arguments of the proof of \Tr{lem hat}, with $S$ being a pair of points  $\partial X_k \cap \BS^1$, 
we obtain $c(U)=\infty$; indeed, the robber can choose to stay inside one of the $X_k$, chosen after the number of cops is fixed. Here, we are assuming the diameter of each $X_k$ to be much smaller than that of $\BS^1$.

It is easy to find two 2-discs $S_1,S_2$ in $\BS^3$, bounded by circles $C_1,C_2$, \st\ the complement of $S_1 \cup S_2 \cup U$ consists of two components $F_1,F_2$, each homeomorphic to a 3-disc. Indeed, we can obtain $C_1$ from $\BS^1$, by replacing the subarc inside the image of each  $X_k$ with an appropriate arc contained in the boundary of $X_k$ with the same end-points. Applying \Tr{lem hat} four times starting with $U$, and with $S$ being $C_1,C_2, \partial F_1$ and $\partial F_2$, in that order, we end up with the desired game space $X$. 
\end{proof}

\section{Further problems} \label{FP}

We conclude with some open problems that seek to make connections between the cop number of a space and some of its classical topological and metric properties.
\medskip

The doubling constant of a metric space $(X,d)$ is the minimal $k \in \N \cup \{\infty\}$ such that for all $x\in X$, and  $r > 0$, the ball $B(x,r)$ can be covered by at most $k$ balls of radius $r/2$. 

\begin{conjecture} \label{conj double}
Let $X$ be a  game space with finite doubling constant. Then $c(X) < \infty$.
\end{conjecture}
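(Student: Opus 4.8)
The plan has three parts. \textbf{First}, I would reduce \Cnr{conj double} to the statement that, for some $k=k(\lambda)$ depending only on the doubling constant $\lambda$ of $X$, $k$ cops can guarantee the value of the game on $X$ to be at most $\epsilon$, for \emph{every} $\epsilon>0$. This reduction rests on a maintenance trick available in any geodesic space: if at some finite step $c^n_i$ lies within $\delta$ of $r^n$, then by moving $\tau(m)$ along a $c^{m-1}_i$--$r^m$~geodesic at every later step $m$, cop $i$ alone keeps $d(c^m_i,r^m)\le\delta$ forever, since $d(c^{m-1}_i,r^m)\le d(c^{m-1}_i,r^{m-1})+\tau(m)\le\delta+\tau(m)$ and moving $\tau(m)$ towards $r^m$ then restores distance $\le\delta$. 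Hence $k+1$ cops can proceed in phases $j=1,2,\dots$ with $\epsilon_j:=2^{-j}$: one ``maintainer'' cop holds the current best distance, while the other $k$ cops run the ``value $\le\epsilon_{j+1}$'' strategy on all of $X$ from their current positions; once that brings a cop within $\epsilon_{j+1}$, that cop becomes the new maintainer and the old one --- now superfluous because $\epsilon_{j+1}<\epsilon_j$ --- rejoins the working group. Only ever one cop is tied up, and letting $\epsilon_j\to0$ forces the value to $0$. (As usual one may assume $\tau$ decreasing, by Mohar \cite{MohMin}.)

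\textbf{Second}, to achieve value $\le\epsilon$ with boundedly many cops I would use a scale-recursive confinement, maintaining the invariant that, after finitely many steps, the robber is trapped in a ball of radius $r_j:=2^{-j}\diam(X)$. To pass from scale $j$ to scale $j+1$: by the doubling property cover the current ball by at most $\lambda$ balls $B_1,\dots,B_\lambda$ of half the radius, and have the cops \emph{guard} the union of the metric spheres $\partial B_i=\{y:d(\mathrm{centre}(B_i),y)=r_{j+1}\}$; since any path leaving $B_i$ crosses $\partial B_i$, the robber gets trapped in a single $B_{i_0}$, a ball of radius $r_{j+1}$. Crucially the coarser spheres may now be forgotten, as $\partial B_{i_0}$ already separates $B_{i_0}$ from the rest of $X$, so the same family of cops can be redeployed one scale down; a second team of cops of the same size holds the current confinement during the finite repositioning interval, so the total count stays bounded. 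Iterating until $r_j\le\epsilon$ traps the robber in an $\epsilon$-ball that contains a cop, giving value $\le2\epsilon$. (Equivalently one could discretise: relate $c(X)$ to the cop number of a net graph $G_\epsilon$ of $X$ via a coarsening argument that is the reverse of the shadow-cop construction in the proof of \Tr{lem hat}; I find the continuous formulation cleaner.)

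The whole argument therefore rests on a \textbf{guarding lemma}, which I expect to be the main obstacle: \emph{in a compact geodesic space with doubling constant $\lambda$ there is $g(\lambda)$ such that $g(\lambda)$ cops can guard any metric sphere $S=\{y:d(x,y)=\rho\}$}, i.e.\ prevent the robber from ever reaching $S$ uncaught. This is the higher-dimensional, metric-space analogue of the Aigner--Fromme shortest-path guarding lemma \cite{AigFroGam} (one cop shadows the projection onto a geodesic) and of the circle-guarding behind Mohar's surface bounds \cite{MohGam} and the constant-curvature results of Ir\v{s}i\v{c}, Mohar \& Wesolek \cite{IrMoWeCop}. The natural attack is an inner induction on the Assouad dimension of $X$ (controlled by $\lambda$): decompose $S$ together with a thin collar around it into boundedly many pieces of diameter $\le\rho/2$ using doubling, recurse on the lower-dimensional ``seams'' of this decomposition, and glue the partial strategies; for ``tree-like'' doubling spaces (Laakso-type spaces) the seams are essentially finite sets, while for manifold-like spaces one should recover a bound polynomial in the dimension. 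Pushing this induction through uniformly in $\lambda$, and making the transition-timing in the second part precise, is where the difficulty lies. One can also phrase the lemma combinatorially --- as the assertion that the cop number of a ``uniformly doubling'' graph (one in which every ball of every radius and centre has polynomially bounded size, with uniform constants) is bounded in terms of its doubling constant; note that a bound on the maximum degree alone does not suffice, since bounded-degree graphs can have unbounded cop number --- and it is conceivable that even this graph statement is open.
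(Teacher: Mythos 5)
First, a point of order: \Cnr{conj double} is posed in \Sr{FP} as an \emph{open} conjecture --- the paper contains no proof of it --- so there is nothing to compare your argument against, and any complete proof would be a new result rather than a reconstruction.

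As a proof, your proposal has a genuine gap, and it is exactly where you suspect: everything is funnelled into the ``guarding lemma'' for metric spheres, which you do not prove and which carries essentially all of the content of the conjecture. The surrounding machinery is sound but routine: the maintenance trick (a cop at distance $\delta$ stays at distance $\le\delta$ by moving along geodesics to $r^m$, since $d(c^{m-1}_i,r^m)\le\delta+\tau(m)$) is correct; the phase/handoff scheme works once one notes that a strategy with value $\le\epsilon_{j+1}/2$ achieves distance $<\epsilon_{j+1}$ at some \emph{finite} step, so the handoff actually occurs; and the separation claim is fine because $t\mapsto d(x,p(t))$ is continuous, so any path from $B(x,\rho)$ to its exterior meets the sphere $\{d(x,\cdot)=\rho\}$. (Two small fixable omissions: confining the robber to an $\epsilon$-ball does not by itself put a cop within $\epsilon$ of him --- you need one extra cop to walk to the centre of the final ball; and the ``value $\le\epsilon$'' strategy must be shown to work from arbitrary cop positions mid-game, which is fine here only because the robber chooses the initial positions and the tail of $\tau$ still has infinite sum.) But the inner induction you sketch for the guarding lemma --- on Assouad dimension, decomposing a metric sphere into pieces with ``seams'' --- is not an argument: metric spheres in a doubling geodesic space can be topologically and metrically wild (disconnected, nowhere locally connected, not themselves doubling as length spaces), there is no a priori notion of dimension to induct on for the seams, and no analogue of the Aigner--Fromme projection onto a geodesic is available for such a set. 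Indeed, guarding a sphere with $g(\lambda)$ cops already yields the conjecture via your scale recursion, so the lemma is not a reduction to something simpler but a restatement of the difficulty; as you concede, even the discrete version for uniformly doubling graphs may be open. The proposal is therefore a reasonable programme, not a proof.

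One further caution on calibrating expectations: the paper's Counterexamples~1 and~2 are consistent with your programme only because their doubling constants are infinite (the hub vertex $w$ in Counterexample~1, and the unbounded-genus surfaces $\cs_k$ in Counterexample~2, destroy doubling), so any proof of the guarding lemma must use doubling in an essential, quantitative way rather than only compactness and the geodesic property.
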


\begin{problem} \label{prb tdim}
Let $X$ be a compact topological space \st\ $c((X,d))<\infty$ holds for every geodesic metric $d$ compatible with the topology of $X$. Must $X$ have topological dimension at most 2?
\end{problem}

Recall that Ir\v{s}i\v{c}, Mohar \& Wesolek \cite{IrMoWeCopII} proved that the unit ball $B$ in $\ell^2(\N)$ has $c_0(B)=\infty$ but $c(B)=1$. This motivates

\begin{problem} \label{prb c0}
Suppose a game space $(X,d)$ satisfies $c_0(X)=\infty$ but $c(X)<\infty$. Must $X$ have infinite topological dimension?
\end{problem}

Call a metric space $(X,d)$ \defi{homogeneous}, if \fe\ $x,y\in X$ there is an isometry $i: X \to X$ with $i(x)=y$. 
\begin{conjecture} \label{conj hom}
Let $X$ be a homogeneous game space. Then $c(X) < \infty$.
\end{conjecture}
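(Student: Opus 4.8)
The plan is to leverage the transitive action of the isometry group to reduce the problem to a finite-dimensional, indeed manifold, setting, and there to bound the cop number by a covering-and-guarding argument in the spirit of Aigner and Fromme \cite{AigFroGam}. First I would record the basic structure. Since $X$ is compact, its isometry group $G := \mathrm{Isom}(X)$, equipped with the uniform metric, is itself compact by the Arzel\`a--Ascoli theorem, and homogeneity says exactly that $G$ acts transitively on $X$. Fixing a basepoint $o$ with stabiliser $H$, we may thus identify $X$ with the compact homogeneous space $G/H$. Two regularity facts will be used repeatedly: every ball $B(x,r)$ is \defi{geodesically star-shaped} about its centre (any geodesic from $x$ to $y\in B(x,r)$ stays in $B(x,r)$, since each of its points is at distance at most its length $<r$ from $x$), so $X$ is locally path-connected; and, by transitivity, all balls of a fixed radius are mutually isometric, so the local geometry of $X$ is the same at every point.

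Next, the finite-dimensional case. If $X$ has finite topological dimension, then $X$ is a compact, connected, locally connected, finite-dimensional homogeneous space, and by the structure theory of such spaces (Montgomery and Zippin) $G$ is a compact Lie group and $X=G/H$ is a closed topological manifold. Averaging any smooth Riemannian metric over the Haar measure of $G$ yields a $G$-invariant Riemannian metric $g$ whose induced length metric is bi-Lipschitz to $d$ (by compactness and invariance), so it suffices to bound $c(X,g)$. On a compact homogeneous Riemannian $n$-manifold I would cover $X$ by finitely many geodesically convex balls and have the cops \emph{guard} the codimension-one pieces separating them, thereby confining the robber to an ever-smaller region; since each separator is again a compact homogeneous-type set of dimension $n-1$, an induction on dimension yields a bound $c(X)\le f(n)$ for some function $f$. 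This is the point where the single-cop geodesic-guarding trick of \cite{AigFroGam}, already exploited in the proof of \Tr{lem hat}, is applied simultaneously to a bounded family of separators.

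It remains to treat the infinite-dimensional case, which I expect to be the main obstacle. The natural approach is approximation: write $X=G/H$ as an inverse limit of finite-dimensional homogeneous quotients $X_N=G/H_N$ (using that the compact group $G$ is an inverse limit of Lie groups), with $G$-equivariant $1$-Lipschitz projections $p_N\colon X\to X_N$. Because $p_N$ is $1$-Lipschitz, any legal move in $X$ projects to a legal move in $X_N$, so the cops can play the finite-dimensional game on $X_N$; and because $X$ is compact and the $p_N$-fibres have diameter tending to $0$, coming within $\epsilon$ of the robber in $X$ is implied by coming sufficiently close in $X_N$ for $N=N(\epsilon)$ large.

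The difficulty is that the finite-dimensional bound $f(n)$ above may grow with $n$, whereas winning in $X$ requires a \emph{single} finite number of cops that succeeds at every scale $\epsilon$. Overcoming this is the crux: one must either establish a bound on $c(X_N)$ that is uniform in $N$ for this particular inverse system---plausibly via the uniform local geometry granted by homogeneity---or design an adaptive strategy in which a fixed squad of cops, having won the coarse game on some $X_N$, re-purposes itself to refine its control on $X_{N+1}$ without ever relinquishing the closeness already achieved. I regard proving such a uniform or adaptive bound, and in particular ruling out that the infinite-dimensional flat torus forces $c\to\infty$, as the hard heart of the conjecture; it is conceivable that this is exactly where a homogeneous-but-non-doubling space could break the argument, linking the difficulty back to \Cnr{conj double}.
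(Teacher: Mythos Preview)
This statement is posed in the paper as an open \emph{conjecture} in \Sr{FP}; there is no proof in the paper to compare against. Your proposal is therefore an attempt at an open problem, and it has genuine gaps at several points, not only in the infinite-dimensional case you flag yourself.

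First, the reduction to a Riemannian metric does not go through. Even granting Montgomery--Zippin so that $X=G/H$ is a closed manifold, your claim that the $G$-invariant Riemannian length metric is bi-Lipschitz to the given metric $d$ is false in general: by Berestovskii's theorem a homogeneous geodesic metric on a manifold can be genuinely sub-Finsler (e.g.\ a left-invariant sub-Riemannian metric on $SU(2)\cong\BS^3$), with Hausdorff dimension exceeding the topological dimension, hence not bi-Lipschitz to any Riemannian metric. And even where a bi-Lipschitz equivalence does hold, you have not argued that $c(\cdot)$ is a bi-Lipschitz invariant; the game is governed by exact distances via the agility function, and an $L$-bi-Lipschitz change of metric gives the cops up to $L^2$ times the robber's reach when you try to transport a strategy, so there is no reason to expect $c(X,d)=c(X,g)$.

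Second, the inductive guarding step is not justified. The Aigner--Fromme idea you invoke (and which the paper adapts in the proof of \Tr{lem hat}) lets a single cop guard a \emph{geodesic}, not an arbitrary codimension-one separator; guarding an $(n-1)$-dimensional hypersurface with finitely many cops is essentially the problem you are trying to solve one dimension down. Moreover, the boundaries of your convex balls, or any separators you choose, will typically not be homogeneous, so the inductive hypothesis does not apply to them. Without a concrete mechanism to guard such separators with a bounded number of cops, the finite-dimensional case is already open---which is consistent with the paper leaving \Cnr{conj hom} (and the related \Cnr{conj double}) as problems rather than results.
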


\acknowledgement{I thank George Kontogeorgiou for a discussion that triggered \Sr{sec common}.

\bibliographystyle{plain}
\bibliography{../collective}

\end{document}